\newtheorem{thm}{Theorem}[section]
\newtheorem{cor}[thm]{Corollary}
\newtheorem{example}[thm]{Example}
\newtheorem{proposition}[thm]{Proposition}
\theoremstyle{remark}
\theoremstyle{definition}
\title{Bruhat graphs and pattern avoidance}
\author{Christopher Conklin}
\address{School of Physics \& Astronomy, University of Minnesota, 116 Church St. SE, Minneapolis, MN 55455}
\email{conklin@physics.umn.edu}
\author{Alexander Woo}
\address{Department of Mathematics, University of Idaho, P.O. Box 441103,
Moscow, ID 83844-1103}
\email{awoo@uidaho.edu}
\keywords{Bruhat graph, pattern avoidance, planar graphs}
\date{June 4, 2013}
\begin{document}
\begin{abstract}
We characterize permutations whose Bruhat graphs can be drawn in the
plane and those whose Bruhat graphs can be drawn in the torus.  In particular, we show these properties are characterized by
avoiding finitely many permutations.
\end{abstract}

\maketitle

\section{Introduction}
The set of all permutations (of an arbitrary finite number of
elements) admits a partial order known as pattern containment order.
This partial order is known to admit infinite antichains~\cite{SpiBon,
  Pratt}.  On the other hand, in almost all cases where the set of
permutations satisfying some property has been characterized by
pattern containment, the number of permutations involved is finite.
For some properties, such as in~\cite{Ten11, BilWeed, Jones, BilPaw}, the
number of permutations is moderate to quite large, so this phenomenon
seems to involve more than merely the natural inclination of
mathematicians to study simpler examples.  Our goal in this paper is
to begin the exploration of one possible explanation for this
finiteness.

Associated to each permutation is a directed graph known as the Bruhat
graph.  (For definitions see Section 2.)  Whenever a permutation $\pi$
is contained in a permutation $\sigma$ (so $\pi\leq\sigma$ in pattern
containment order), the Bruhat graph of $\pi$ is a subgraph of the
Bruhat graph of $\sigma$.  While not all properties characterized by
pattern containment can be reduced to properties of Bruhat graphs,
many of the properties that have been so characterized, especially
those coming from algebraic geometry or representation theory, have a
graph theoretic description.  For example, the permutations $w$
avoiding $3412$ and $4231$ are exactly the ones whose Bruhat graphs
are regular, meaning that they have the same number of edges at each
vertex; these are also the ones associated to smooth Schubert
varieties~\cite{LakSand, Car}.

In this paper we study the permutations whose Bruhat graphs can be drawn
(without crossings) on the plane or on the torus.  We show that the planar permutations are precisely the ones that avoid $321$ and have length at most 3, and their Bruhat graphs are either a single point, a single edge, (the edge graph of) a square, or a cube.  We give an analogous characterization of permutations whose Bruhat graphs can be drawn on the torus; the additional possible graphs are $K_{3,3}$ (the complete bipartite graph on two sets of three vertices) and the $4$-cube.  By an old theorem of Atkinson~\cite{Atk-fin}, our characterizations imply that these properties are characterized by avoidance of finitely many patterns.  A computer calculation shows that 29 patterns are needed to characterize planar permutations, and 92 are needed to characterize toroidal ones.

The motivation for considering Bruhat graphs is the Graph Minor
Theorem, a deep and surprising result of Robertson and
Seymour~\cite{RobSey} stating that graph minor order, the partial
order on graphs generated by deleting and contracting edges, has no
infinite antichains.  In particular, the graphs satisfying any
property which is preserved under deletion and contraction are
characterized by avoiding finitely many graphs.  An important special
case of the Graph Minor Theorem is the one involving the class of
graphs which can be drawn without crossings on a surface of genus
$g$~\cite{RobSeyGenus}, and the original inspiration for graph minor
theory was Kuratowski's Theorem, which characterizes graphs that cannot be
drawn in the plane.  Our results in this paper can therefore be seen
as potential first steps towards using graph minor theory to establish
either directly or by analogy a possible finiteness result for pattern
avoidance under yet unknown hypotheses involving Bruhat graphs.

In particular, we also expect that the set of permutations whose Bruhat graphs can be drawn on a surface of genus $g$ can be characterized by avoidance of a finite number of patterns.  However, it appears that any characterization of these permutations, even if not directly in terms of the patterns to be avoided, will be fairly long even for the case $g=2$.

We were particularly inspired by the question of Billey and
Weed~\cite{BilWeed} concerning whether, for a fixed integer $k$,
having Kazhdan--Lusztig polynomial $P_{id,\sigma}(1)\leq k$ is
characterized by the permutation $\sigma$ avoiding some finite list of
patterns.  (Billey and Braden previously showed in~\cite{BilBra} that
this property is characterized by avoiding a possibly infinite set of
patterns.)  This Kazhdan--Lusztig polynomial is known to be a property
depending only on the Bruhat graph of $\sigma$~\cite{duCloux, Bre,
  Del}, although the aforementioned property is not preserved by
deletion and contraction on Bruhat graphs (nor is its negation).

Our work is in some sense orthogonal to earlier work of Atkinson, Murphy, and
Ruskuc~\cite{Atk-fin,AMR02, AMR06} on finitely generated order ideals in pattern
containment order.  Their work takes the viewpoint that a permutation is a
string consisting of distinct integers.  Our viewpoint considers permutations
as elements of a Coxeter group.  Therefore our results are complementary to
theirs.  Furthermore, pattern avoidance has alternative definitions within the
framework of Coxeter groups~\cite{BilPos}.  We believe our work can be easily
extended to the other Coxeter groups.

Section 2 is devoted to definitions, while Sections 3 and 4
respectively give proofs for our theorems about permutations whose Bruhat graphs are planar and toroidal.

Most of this work was the result of a summer undergraduate research
project at St. Olaf College mentored by the second author.  We
thank St. Olaf College for the financial support that made this
work possible.  The first part of the proof of Theorem~\ref{thm:toroidal}, showing that
the Bruhat graph of 3412 is not toroidal, is due to Michael Eldredge.  We
thank him for allowing us to include it in this paper.

\section{Definitions}

We begin with definitions from the combinatorics of Coxeter groups
applied to the specific case of the symmetric group $S_n$; a standard
reference for this material is~\cite{BjoBre}.

By a {\bf transposition} $t$ we mean some 2-cycle $(i\,\,j)$ in the
symmetric group.  An {\bf adjacent transposition} is one of the form
$(i\,\,i+1)$.

Let $\pi\in S_n$ be a permutation.  The {\bf length} of $\pi$, denoted
$\ell(\pi)$, is the minimum number of adjacent transpositions
$s_{i_1},\ldots,s_{i_\ell}$ such that $\pi$ can be written as their
product $s_{i_1}\cdots s_{i_\ell}$.  An {\bf inversion} of $\pi$ is a
pair of indices $i<j$ with $\pi(i)>\pi(j)$; note that $\ell(\pi)$ is
also equal to the number of inversions of $\pi$.

The {\bf absolute length} of $\pi$, denoted $a(\pi)$, is the minimum
number of transpositions $t_{i_1},\ldots, t_{i_a}$, not necessarily
adjacent transpositions, such that $\pi=t_{i_1}\cdots t_{i_a}$.  If
$\pi\in S_n$ has $c$ disjoint cycles (counting fixed points as
1-cycles), then $a(\pi)=n-c$.  By definition, $a(\pi)\leq \ell(\pi)$
for any permutation $\pi$.

\begin{example}
Permutations are written in one line notation unless stated otherwise.
The permutation $\pi=3412$ has length $\ell(\pi)=4$, and absolute
length $a(\pi)=2$.
\end{example}

The symmetric group $S_n$ has a partial ordering known as {\bf Bruhat
  order}.  Define $\pi\prec\sigma$ if $\ell(\pi)<\ell(\sigma)$ and
there is a transposition $t$ such that $t\pi=\sigma$ (or equivalently
a transposition $t^\prime$ such that $\pi t^\prime=\sigma$).  Bruhat
order is the transitive closure of $\prec$, so $\pi\leq\tau$ if there
exist permutations $\sigma_1,\ldots,\sigma_k\in S_n$ such that
$\pi\prec\sigma_1\prec\cdots\prec\sigma_k\prec\tau$.

The {\bf Bruhat graph} for $S_n$ is the directed graph whose vertices
are the elements of $S_n$, with edges defined as follows.  Given
permutations $\sigma$ and $\tau$, there is an edge
$\sigma\rightarrow\tau$ if $\sigma\prec\tau$, meaning that
$\ell(\sigma)<\ell(\tau)$ and there exists a transposition $t$ with
$\sigma=\tau t$.  (Note $\ell(\tau)-\ell(\sigma)$ need not equal 1.)
Given a permutation $\pi$, the {\bf Bruhat graph for $\pi$}, denoted
$\mathcal{B}(\pi)$, is the induced subgraph whose vertices are those
labeled by permutations $\sigma$ with $\sigma\leq\pi$; this is the
largest subgraph with unique sink $\pi$.

The length of $\pi$ is the length of the longest directed path
(necessarily from the identity to $\pi$) in $\mathcal{B}(\pi)$, and
the absolute length of $\pi$ is the length of the shortest directed
path in $\mathcal{B}(\pi)$ from the identity (which is graph
theoretically the unique source) to $\pi$ (the unique sink).  (Sometimes absolute length is defined as the length of the shortest undirected path, but a theorem of Dyer~\cite{Dyer} shows these two definitions are equivalent.)

Now we give various definitions related to the notion of {\bf pattern
  containment}; a standard reference for this subject is~\cite{Bona}.
Let $\pi\in S_k$ and $\tau\in S_n$ with $k\leq n$.  We say that $\tau$
{\bf (pattern) contains} $\pi$ if there exist indices $1\leq
i_1<\ldots<i_k\leq n$ such that $\tau(i_a)<\tau(i_b)$ if and only if
$\pi(a)<\pi(b)$.  We say $\tau$ {\bf (pattern) avoids} $\pi$ if $\tau$
does not contain $\pi$.

\begin{example}
The permutation $5736241$ contains the permutation $3412$ (both
written in one-line notation) three different ways using the bolded
entries: $\mathbf{573}62\mathbf{4}1$, $\mathbf{57}36\mathbf{24}1$, and
$\mathbf{5}73\mathbf{624}1$.

In contrast, $135246$ avoids $3412$.
\end{example}

The following proposition is immediate.

\begin{proposition}
\label{patterntograph}
If $\tau$ contains $\pi$, then $B(\pi)$ is isomorphic to a subgraph of
$B(\tau)$ which includes the sink vertex $\tau$.
\end{proposition}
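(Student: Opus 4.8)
The plan is to build an explicit graph embedding of $\mathcal{B}(\pi)$ into $\mathcal{B}(\tau)$ straight from the containment data, arranged so that the sink $\pi$ is sent to the sink $\tau$. Suppose $\pi\in S_k$ and $\tau\in S_n$, and fix indices $i_1<\cdots<i_k$ witnessing the containment, so that the relative order of $\tau(i_1),\ldots,\tau(i_k)$ agrees with that of $\pi(1),\ldots,\pi(k)$. Listing the value set $\{\tau(i_1),\ldots,\tau(i_k)\}$ in increasing order as $v_1<\cdots<v_k$, the containment condition says precisely that $\tau(i_a)=v_{\pi(a)}$ for each $a$. For a vertex $\sigma\le\pi$ of $\mathcal{B}(\pi)$ I would define $\hat\sigma\in S_n$ by $\hat\sigma(i_a)=v_{\sigma(a)}$ for $1\le a\le k$ and $\hat\sigma(j)=\tau(j)$ for every $j\notin\{i_1,\ldots,i_k\}$. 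This is a permutation, since on the positions $i_1,\ldots,i_k$ it merely rearranges the values $v_1,\ldots,v_k$ according to $\sigma$ while fixing every other position and its (complementary) value exactly as in $\tau$. By construction $\hat\pi=\tau$, and the assignment $\sigma\mapsto\hat\sigma$ is injective because $\sigma$ can be read back off from the relative order of the entries $\hat\sigma(i_1),\ldots,\hat\sigma(i_k)$.

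The heart of the argument is that this map carries edges to edges. An edge $\sigma\to\rho$ of $\mathcal{B}(\pi)$ means $\sigma\prec\rho$, so $\rho=\sigma\,(a\,\,b)$ for some positions $a<b$; the accompanying condition $\ell(\sigma)<\ell(\rho)$ is, by the standard length criterion for right multiplication by a transposition, equivalent to $\sigma(a)<\sigma(b)$ (see~\cite{BjoBre}). I would then check directly that $\hat\rho=\hat\sigma\,(i_a\,\,i_b)$: the permutations $\hat\sigma$ and $\hat\rho$ agree away from the positions $i_a,i_b$, and at those two positions they interchange the values $v_{\sigma(a)}$ and $v_{\sigma(b)}$. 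Because $i_a<i_b$ and $\sigma(a)<\sigma(b)$ force $\hat\sigma(i_a)=v_{\sigma(a)}<v_{\sigma(b)}=\hat\sigma(i_b)$, the same criterion gives $\ell(\hat\sigma)<\ell(\hat\rho)$, so $\hat\sigma\prec\hat\rho$ is again an edge.

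Finally I would verify that each $\hat\sigma$ is genuinely a vertex of $\mathcal{B}(\tau)$, that is, $\hat\sigma\le\tau$. Since Bruhat order is the transitive closure of $\prec$, the relation $\sigma\le\pi$ supplies a chain $\sigma\prec\sigma_1\prec\cdots\prec\sigma_m\prec\pi$ each of whose steps is an edge of $\mathcal{B}(\pi)$; applying the edge-preservation step to every link yields $\hat\sigma\prec\hat\sigma_1\prec\cdots\prec\hat\pi=\tau$, so $\hat\sigma\le\tau$. Taking the image of the map to be the desired subgraph then presents $\mathcal{B}(\pi)$ as isomorphic to a subgraph of $\mathcal{B}(\tau)$ containing the sink $\tau$, and injectivity makes this an isomorphism onto the image automatically, so there is no need to check that non-edges are preserved.

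I expect the only real obstacle to be keeping the two conventions straight: one must confirm that the value-swap $(i_a\,\,i_b)$ downstairs is exactly the image of the position pair $(a,b)$ upstairs, and that the length criterion is applied in the right-multiplication (value-swapping) form in both $S_k$ and $S_n$. Once the correspondence $\sigma(a)<\sigma(b)\Leftrightarrow\hat\sigma(i_a)<\hat\sigma(i_b)$ is pinned down, every step is routine, consistent with the statement being labeled immediate.
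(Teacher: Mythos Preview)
Your argument is correct and uses exactly the same construction as the paper: both define the embedding that fixes $\tau$ off the containment positions $i_1,\ldots,i_k$ and permutes the values at those positions according to $\sigma$. The paper's proof is a one-sentence sketch pointing to the induced subgraph on $\{\hat\sigma:\hat\sigma(j)=\tau(j)\text{ for }j\notin\{i_1,\ldots,i_k\}\}$, while you have spelled out the verifications (injectivity, edge preservation, and $\hat\sigma\le\tau$) that the paper leaves implicit.
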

\begin{proof}
Let $i_1<\cdots<i_k$ be the indices of by which $\pi$ is contained in
$\tau$.  Consider the induced subgraph given by the vertices of
$B(\tau)$ labelled by permutations $\sigma$ for which
$\sigma(j)=\tau(j)$ for all $j$ not among the containment indices,
meaning that $j\neq i_a$ for all $a$.
\end{proof}

Pattern containment is a partial order relation; the poset of
permutations under pattern containment is sometimes called {\bf
  pattern order}.

Finally, we need several definitions from graph theory.  A graph is {\bf
  planar} if it can be drawn in the plane without edges crossing, and a graph is {\bf toroidal} if it can be drawn on the surface of a torus without edges crossing.  The
following theorem is classical and known as Kuratowski's Theorem.

\begin{thm}
A graph is planar if and only if it contains no subgraph isomorphic to
a subdivision of $K_{3,3}$ (the complete bipartite graph with 3
vertices on each side) or $K_5$ (the complete graph on 5 vertices).
\end{thm}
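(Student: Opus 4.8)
The plan is to prove the two implications separately. The forward direction — that a graph containing a subdivision of $K_{3,3}$ or $K_5$ cannot be planar — is elementary, while the converse is the substantial classical content and will require an induction on connectivity.

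For the easy direction, I would first show that $K_5$ and $K_{3,3}$ are themselves non-planar using Euler's formula $V - E + F = 2$ for a connected planar graph with $V$ vertices, $E$ edges, and $F$ faces. Since each face is bounded by at least three edges and each edge borders at most two faces, $3F \le 2E$, whence $E \le 3V - 6$; for $K_5$ this would read $10 \le 9$. Because $K_{3,3}$ is bipartite, its shortest cycle has length four, so each face is bounded by at least four edges, giving $E \le 2V - 4$ and the false inequality $9 \le 8$. I would then note that subdividing an edge (replacing it by a path of new degree-two vertices) neither creates nor destroys planarity, and that every subgraph of a planar graph is planar. Hence any subdivision of $K_5$ or $K_{3,3}$ is non-planar, and a graph containing one cannot be planar.

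For the converse I would prove the contrapositive by induction on the number of edges: assuming $G$ contains no subdivision of $K_5$ or $K_{3,3}$, I would show $G$ is planar. The argument reduces to the $3$-connected case. If $G$ has a cut vertex or a separating pair of vertices, I would split $G$ along the cut into strictly smaller graphs (inserting a virtual edge across a $2$-separation), observe that neither piece can acquire a forbidden subdivision, embed each piece by the inductive hypothesis, and glue the embeddings along the cut by placing one piece inside a face of the other incident to the cut. This leaves the case that $G$ is $3$-connected.

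When $G$ is $3$-connected I would invoke the classical fact that every $3$-connected graph on at least five vertices has an edge $e$ whose contraction $G/e$ remains $3$-connected. The smaller graph $G/e$ inherits the hypothesis, so by induction it is planar, and by Whitney's theorem a $3$-connected planar graph has an essentially unique embedding. Writing $v$ for the vertex of $G/e$ obtained by contracting $e = xy$, the neighbors of $v$ lie on the boundary cycle of the face vacated when $v$ is deleted; I would recover an embedding of $G$ by splitting $v$ back into $x$ and $y$ within that face. The \emph{main obstacle} is exactly this final step: one must show that the neighbors of $x$ and the neighbors of $y$ are not hopelessly interleaved around that boundary, and the crucial point is that any interleaving obstructing a planar split would itself exhibit a subdivision of $K_5$ or $K_{3,3}$, contradicting the hypothesis on $G$. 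This combinatorial case analysis, powered by $3$-connectivity and Whitney uniqueness, is where the real difficulty of the theorem resides.
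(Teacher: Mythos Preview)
The paper does not prove this statement at all: it is simply quoted as the classical Kuratowski Theorem, with no argument given. So there is no ``paper's own proof'' to compare your proposal against.

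That said, your sketch is a faithful outline of the standard Thomassen proof and is essentially correct at the level of a plan. Two places deserve more care if you flesh it out. First, when you write that $G/e$ ``inherits the hypothesis,'' note that the hypothesis is phrased in terms of \emph{subdivisions} (topological minors), whereas contraction is naturally compatible with \emph{minors}; you therefore need the lemma that a graph has a $K_5$ or $K_{3,3}$ subdivision if and only if it has a $K_5$ or $K_{3,3}$ minor (the Wagner--Kuratowski equivalence), which is an extra step since $K_5$ has vertices of degree~$4$. Second, in the reduction to the $3$-connected case, when you split along a $2$-separation $\{u,v\}$ and insert a virtual edge $uv$ in each piece, you must argue that this added edge cannot create a forbidden subdivision in a piece; the usual justification routes the virtual edge through a $u$--$v$ path in the other piece. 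Both points are routine to fill in, but they are genuine steps rather than automatic.
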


The analogous theorem for toroidal graphs is not known; it is known that the list of graphs to avoid is at least thousands of graphs long.

Finally, given some subset $W$ of the set of vertices of a graph $G$, the {\bf subgraph induced by $W$} is the graph containing the vertices in $W$ and every edge of $G$ which connects two vertices of $W$.  Any subgraph $H\subset G$ that can be formed in this manner (or, in other words, any subgraph $H$ where every edge of $G$ between two vertices of $H$ is also in $H$) is called an {\bf induced subgraph}.

\section{Planar Bruhat graphs}

In this section we give a number of characterizations of permutations
with planar Bruhat graphs, beginning with the following.

\begin{thm}
\label{thm:planar}
Let $\sigma$ be a permutation.  Then the Bruhat graph $\mathcal{B}(\sigma)$ is
planar if and only if $\sigma$ avoids 321 and $\ell(\sigma)\leq 3$.
\end{thm}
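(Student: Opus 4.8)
The plan is to prove both implications, using two structural facts throughout. First, every Bruhat graph is bipartite: an edge $\sigma\to\tau$ requires $\tau=t\sigma$ for a transposition $t$, and since a transposition is an odd permutation we have $\ell(\tau)\not\equiv\ell(\sigma)\pmod 2$, so length-parity gives a bipartition. Consequently any \emph{planar} Bruhat graph with $V$ vertices and $E$ edges must satisfy the bipartite Euler bound $E\le 2V-4$. Second, by the Proposition, pattern containment embeds $\mathcal{B}(\pi)$ as a subgraph of $\mathcal{B}(\sigma)$, and since nonplanarity passes to supergraphs, it suffices to exhibit nonplanar Bruhat graphs for a few minimal obstructions. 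I would also invoke the known characterization of \emph{Boolean} permutations (those $w$ for which $[\mathrm{id},w]$ is a Boolean lattice) as exactly the permutations avoiding $321$ and $3412$.

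For the backward direction, suppose $\sigma$ avoids $321$ and $\ell(\sigma)\le 3$. Since $3412$ has length $4$, no permutation of length at most $3$ can contain it, so $\sigma$ automatically avoids $3412$ as well; hence $\sigma$ is Boolean and $[\mathrm{id},\sigma]$ is a Boolean lattice of rank $\ell(\sigma)\le 3$. I would then check that $\mathcal{B}(\sigma)$ has no non-cover (``long'') edges: a long edge forces a length jump of at least $3$ between comparable elements, which for $\ell(\sigma)\le 3$ could only be an edge $\mathrm{id}\to\sigma$ with $\ell(\sigma)=3$ and $\sigma$ itself a transposition, impossible since every length-$3$ transposition contains $321$. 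Thus $\mathcal{B}(\sigma)$ is exactly the hypercube graph $Q_{\ell(\sigma)}$ with $\ell(\sigma)\le 3$, i.e. a point, an edge, a square, or a cube, each of which is planar.

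For the forward direction I argue by contrapositive: if $\sigma$ fails to avoid $321$ or has $\ell(\sigma)\ge 4$, I show $\mathcal{B}(\sigma)$ is nonplanar in three cases. If $\sigma$ contains $321$, a direct computation shows $\mathcal{B}(321)$ has the six elements of $S_3$ as vertices and is the complete bipartite graph $K_{3,3}$ (bipartitioned by length parity), which is nonplanar, so the Proposition finishes this case. Otherwise $\sigma$ avoids $321$ and $\ell(\sigma)\ge 4$. If $\sigma$ contains $3412$, I compute $\mathcal{B}(3412)$ directly: its vertex set is $[\mathrm{id},3412]=\{v\in S_4: v(1)\le 3,\ v(4)\ne 1\}$, which has $14$ elements, and counting for each $v$ the transpositions $t$ with $tv\le 3412$ yields $29$ edges; being bipartite with $29>2\cdot 14-4=24$, the graph is nonplanar, and the Proposition again finishes. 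Finally, if $\sigma$ avoids both $321$ and $3412$, it is Boolean with $\ell:=\ell(\sigma)\ge 4$, so $[\mathrm{id},\sigma]$ is a Boolean lattice on $2^{\ell}$ elements; its $\ell\,2^{\ell-1}$ covering relations are Bruhat edges, giving at least $32$ edges on $2^{\ell}$ vertices, and $\ell\,2^{\ell-1}>2\cdot 2^{\ell}-4$ for $\ell\ge 4$, so $\mathcal{B}(\sigma)$ is nonplanar by the bipartite Euler bound.

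The main obstacle is the explicit analysis of $\mathcal{B}(3412)$: since $3412$ avoids $321$, its $K_{3,3}$ cannot simply be inherited, and nonplanarity must be established from within the $14$-vertex interval. The bipartite Euler bound is what makes this tractable, because once the edge count $29$ is verified it replaces the otherwise fiddly task of exhibiting an explicit subdivision of $K_{3,3}$ or $K_5$. The remaining risk is bookkeeping: correctly identifying $[\mathrm{id},3412]$ and its edges, and confirming that the Boolean intervals of length at most $3$ genuinely carry no long edges. I expect all three nonplanar base cases, namely $K_{3,3}=\mathcal{B}(321)$, the graph $\mathcal{B}(3412)$, and $Q_{\ell}$ for $\ell\ge 4$, to reduce uniformly to the inequality $E>2V-4$, so that the conceptual core is the bipartiteness observation together with the Boolean-interval structure theorem.
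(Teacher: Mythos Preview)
Your argument is correct, and the overall architecture---$\mathcal{B}(321)=K_{3,3}$, Tenner's Boolean-interval theorem, and the hypercube identification---matches the paper.  The genuine divergence is in how you handle the obstruction $3412$ and, to a lesser extent, how you rule out long edges.

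For $\mathcal{B}(3412)$, the paper does not count anything: it observes that $1432\le 3412$ in Bruhat order, so $\mathcal{B}(1432)$ sits inside $\mathcal{B}(3412)$ as an induced subgraph, and since $1432$ contains the pattern $321$, Proposition~2.3 embeds $K_{3,3}=\mathcal{B}(321)$ into $\mathcal{B}(1432)$.  This is a one-line reduction to the $321$ case and exhibits an explicit $K_{3,3}$.  Your route---verifying $|V|=14$, $|E|=29$ and invoking the bipartite Euler bound $E\le 2V-4$---is correct (and your description $\{v: v(1)\le 3,\ v(4)\ne 1\}$ and the edge count both check out), but it trades a structural observation for a computation.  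What your approach buys is uniformity: the same inequality $E>2V-4$ dispatches $K_{3,3}$, $\mathcal{B}(3412)$, and $Q_\ell$ for $\ell\ge 4$, so nonplanarity never requires finding a forbidden minor by hand.  What the paper's approach buys is that it avoids the edge count entirely and makes transparent \emph{why} $3412$ is obstructed, namely because something below it in Bruhat order already contains $321$.

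For the absence of long edges, the paper argues for \emph{all} Boolean $\sigma$ at once by invoking the regularity of $\mathcal{B}(\sigma)$ when $\sigma$ avoids $3412$ and $4231$: since the cover edges already give degree $\ell(\sigma)$ at every vertex, there is no room for more.  Your argument, specific to $\ell(\sigma)\le 3$, is that the only possible long edge would be $\mathrm{id}\to\sigma$ with $\sigma$ a length-$3$ transposition, which contains $321$; this is perfectly adequate for the theorem but does not yield the stronger statement $\mathcal{B}(\sigma)=Q_{\ell(\sigma)}$ for all Boolean $\sigma$ that the paper extracts along the way.
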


\begin{proof}
The Bruhat graph $\mathcal{B}(321)$ is the complete bipartite graph
$K_{3,3}$, which is not planar.  Furthermore, $\mathcal{B}(3412)$
contains (as a subgraph) $\mathcal{B}(1432)$ since $1432<3412$ in
Bruhat order, and $1432$ contains $321$, so $\mathcal{B}(3412)$ is
also not planar.  Therefore, if $\sigma$ contains $321$ or $3412$,
then $\mathcal{B}(\sigma)$ is not planar.

On the other hand, Tenner showed~\cite[Thm. 5.3]{Ten} that if $\sigma$
avoids both $321$ and $3412$, then the interval in Bruhat order
between the identity and $\sigma$ is isomorphic to the Boolean
lattice.  In this case, all the edges in the Bruhat graph come from
covering relations in the Bruhat order, so the Bruhat graph is the
edge graph for a cube of dimension $\ell(\sigma)$.  (One way to see
this is by noting that $\sigma$ avoids $3412$ and $4231$, so its
Bruhat graph has exactly $\ell(\sigma)$ edges at each vertex, and the
covering relations in the Boolean lattice already provide
$\ell(\sigma)$ edges.)  The edge graph of an $n$-dimensional cube is
planar if and only if $n\leq 3$.  Therefore, $\mathcal{B}(\sigma)$ is
planar if and only if $\sigma$ avoids $321$ and $3412$ and
$\ell(\sigma)\leq 3$.  Since $\ell(3412)=4$, the condition that
$\sigma$ avoid $3412$ is encompassed in the condition that
$\ell(\sigma)\leq 3$.
\end{proof}

We also have a longer proof of the above theorem which is independent
of~\cite{Ten}.

The following characterization in terms of the Bruhat graph itself
follows immediately from the above proof.

\begin{cor}
The graph $\mathcal{B}(\sigma)$ is planar if and only if it is
a point, a single edge, or the edge graph of a square or a cube.
\end{cor}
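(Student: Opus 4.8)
The plan is to read the statement off directly from the proof of Theorem~\ref{thm:planar}, where the isomorphism type of every planar Bruhat graph was already pinned down. The corollary is essentially a restatement of that conclusion purely in graph-theoretic language, so almost all of the work has been done.

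For the forward direction, suppose $\mathcal{B}(\sigma)$ is planar. By Theorem~\ref{thm:planar}, $\sigma$ avoids $321$ and satisfies $\ell(\sigma)\leq 3$. Since $\ell(3412)=4>3$, the bound $\ell(\sigma)\leq 3$ forces $\sigma$ to avoid $3412$ as well, so $\sigma$ avoids both $321$ and $3412$. As established in the proof of Theorem~\ref{thm:planar} (via Tenner's result that the interval $[\mathrm{id},\sigma]$ is then Boolean), $\mathcal{B}(\sigma)$ is the edge graph of a cube of dimension $\ell(\sigma)$. Since $\ell(\sigma)\in\{0,1,2,3\}$, these four dimensions give exactly a point (dimension $0$), a single edge (dimension $1$), the edge graph of a square (dimension $2$), and the edge graph of a cube (dimension $3$), which is precisely the list in the statement.

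For the reverse direction, I would simply verify that each of the four named graphs is planar, since the claim ``one of the four graphs implies planar'' is exactly the assertion that these four graphs admit plane drawings. A single point and a single edge are trivially planar, the square (a $4$-cycle) is planar, and the edge graph of a cube is the standard textbook example of a planar graph, drawn as two nested squares joined by four radial edges.

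I expect essentially no obstacle here: the entire graph-theoretic content was settled while proving Theorem~\ref{thm:planar}, and the only genuinely new ingredient is the elementary planarity check for the four graphs in the reverse direction. The single point requiring care is the bookkeeping step that identifies the four cube dimensions $0,1,2,3$ with the four listed isomorphism types, together with the observation that $\ell(\sigma)\leq 3$ automatically supplies the avoidance of $3412$ needed to invoke Tenner's result.
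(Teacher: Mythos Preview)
Your proposal is correct and matches the paper's approach exactly: the paper simply states that the corollary follows immediately from the proof of Theorem~\ref{thm:planar}, and your argument spells out precisely that immediate deduction, including the identification of cube dimensions $0,1,2,3$ with the four listed graphs and the elementary planarity check in the reverse direction.
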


To state our characterization purely in terms of pattern avoidance, we use the following
theorem of Atkinson~\cite[Thm. 2.3]{Atk-fin}.

\begin{thm}
\label{multiplefinite}
Given a permutaton $\pi\in S_k$, the set of permutations containing $\pi$ at most $m$ times is characterized by avoiding a finite set of permutations, all of which are in $S_n$ for some $n\leq k(m+1)$.
\end{thm}

Since the length of a permutation is precisely the number of times it contains the permutation $21$, combining Theorems~\ref{multiplefinite} and~\ref{thm:planar} produces the
following corollary.

\begin{cor}
\label{cor:avoid}
The graph $\mathcal{B}(\sigma)$ is planar if and only if
$\sigma$ avoids $321$ and all permutations $\pi\in S_m$ where $m\leq
8$ and $\ell(\pi)\geq 4$.
\end{cor}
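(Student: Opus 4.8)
The plan is to combine Theorems~\ref{2n} and~\ref{thm:planar} by a direct logical manipulation, so that no new geometric or combinatorial input is required. First I would apply Theorem~\ref{thm:planar}, which states that $\mathcal{B}(\sigma)$ is planar precisely when $\sigma$ avoids $321$ and $\ell(\sigma)\leq 3$. This already isolates two separate conditions, and the only remaining task is to re-express the length bound $\ell(\sigma)\leq 3$ as a pattern-avoidance condition.

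For that I would invoke Theorem~\ref{2n} in the special case $n=4$, where $2n=8$. In this case the theorem says that $\ell(\sigma)\geq 4$ if and only if $\sigma$ contains some $\tau\in S_k$ with $k\leq 8$ and $\ell(\tau)\geq 4$. Passing to the contrapositive, as in the restatement given immediately after Theorem~\ref{2n}, we get that $\ell(\sigma)\leq 3$ if and only if $\sigma$ avoids every $\pi\in S_m$ with $m\leq 8$ and $\ell(\pi)\geq 4$. Substituting this equivalence into the characterization from the previous paragraph produces exactly the statement of the corollary.

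The only point needing care is the bookkeeping. I would verify that $n=4$ is the correct specialization: since planarity forces $\ell(\sigma)\leq 3$, the first length that must be forbidden is $4$, and then the bound $2n=8$ matches the $m\leq 8$ appearing in the statement. I would also note that the clause ``avoids $321$'' is not absorbed into the length conditions, because $\ell(321)=3<4$, so $321$ does not itself occur among the forbidden patterns $\pi$ with $\ell(\pi)\geq 4$; hence both avoidance clauses are genuinely needed. Since everything follows formally from the two cited theorems, there is no substantive obstacle here; the argument is essentially a one-line deduction once the contrapositive restatement of Theorem~\ref{2n} is in place.
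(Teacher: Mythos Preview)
Your proposal is correct and follows exactly the approach the paper intends: the corollary is stated in the paper as an immediate consequence of combining Theorems~\ref{2n} and~\ref{thm:planar}, and your deduction via the contrapositive of Theorem~\ref{2n} with $n=4$ is precisely that combination. Your additional remark that $321$ must be listed separately because $\ell(321)=3<4$ is a helpful sanity check, though the paper does not spell it out.
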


A computer calculation to reduce the set of permutations given by
Corollary~\ref{cor:avoid} to a minimal avoidance set produces the
following.

\begin{cor}
The graph $\mathcal{B}(\sigma)$ is planar if and only if $\sigma$
avoids all of the permutations in the following list:
\noindent\{321, 3412, 23451, 23514, 24153, 25134, 31452, 31524, 41253,
51234, 234165, 231564, 231645, 241365, 214563, 214635, 215364, 216345,
314265, 312564, 312645, 412365, 2315476, 2143675, 2143756, 2145376,
2153476, 3125476, 21436587\}
\end{cor}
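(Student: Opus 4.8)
The plan is to recognize that the permutations with non-planar Bruhat graph form an up-set (filter) in pattern containment order, and that any minimal avoidance set is precisely the antichain of its minimal elements; the result then reduces to a finite enumeration whose output is the stated list.

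First I would check that non-planarity is an up-set property. If $\pi\leq\tau$ in pattern order, then by the earlier proposition that pattern containment embeds $\mathcal{B}(\pi)$ as a subgraph of $\mathcal{B}(\tau)$, and since every subgraph of a planar graph is planar, non-planarity of $\mathcal{B}(\pi)$ forces non-planarity of $\mathcal{B}(\tau)$. Thus the set of permutations with non-planar Bruhat graph is closed upward under pattern containment. It follows that this up-set has a unique antichain of pattern-minimal elements, and that $\mathcal{B}(\sigma)$ is planar if and only if $\sigma$ avoids every one of them; this antichain is exactly the minimal avoidance set.

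Next I would bound the size of each minimal element. By Corollary~\ref{cor:avoid}, every $\sigma$ with non-planar Bruhat graph contains either $321$ or some $\pi\in S_m$ with $m\leq 8$ and $\ell(\pi)\geq 4$. In either case $\sigma$ contains a permutation lying in $S_m$ for some $m\leq 8$ whose own Bruhat graph is non-planar, by Theorem~\ref{thm:planar}. Hence every non-planar permutation contains a non-planar permutation of size at most $8$, so every minimal element of the up-set has size at most $8$. The computation is therefore finite: enumerate all permutations in $S_m$ for $m\leq 8$ (there are $\sum_{m=1}^{8} m! = 46233$ of them), flag each as non-planar exactly when it contains $321$ or has length at least $4$ by Theorem~\ref{thm:planar}, and extract those that are pattern-minimal among the flagged permutations. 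The output is the list of $29$ permutations.

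The main obstacle is verificational rather than conceptual: one must confirm both that the list is an antichain (no listed permutation contains another as a pattern) and that it is complete (every non-planar permutation of size at most $8$ contains one of the $29$). The small cases can be checked by hand—for instance $321$ is non-planar and has no proper non-planar sub-pattern, and $3412$ avoids $321$ yet has length $4$ while each of its one-line deletions yields $312$ or $231$, both planar—but the full check across all $46233$ permutations is carried out by computer.
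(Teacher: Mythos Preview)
Your proposal is correct and follows essentially the same approach as the paper: the paper simply states that a computer calculation reduces the avoidance set of Corollary~\ref{cor:avoid} to a minimal one, and you have spelled out precisely what that calculation entails (the up-set structure, the bound of $8$ on minimal elements, and the enumeration). Your added justification that non-planarity is upward-closed and that minimal elements must lie in $S_m$ for $m\leq 8$ makes explicit what the paper leaves implicit, but the underlying method is identical.
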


Finally, we record the number of permutations with planar Bruhat graphs by
length, which follows from~\cite[Cor. 5.5]{Ten}:

\begin{cor}
For any $m\geq 1$, there are
\begin{enumerate}
\item 1 permutation in $S_m$ of length 0,
\item $(m-1)$ permutations in $S_m$ of length 1,
\item $\frac{(m+1)(m-2)}{2}$ permutations in $S_m$ of length 2, and
\item $\frac{(m+4)(m-1)(m-3)}{6}$ permutations in $S_m$ of length 3
\end{enumerate}
which have planar Bruhat graphs.
\end{cor}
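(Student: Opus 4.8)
The plan is to reduce the count to a pattern-avoidance enumeration and then to Mahonian numbers. By Theorem~\ref{thm:planar}, a permutation $\sigma \in S_m$ has planar Bruhat graph exactly when $\ell(\sigma) \le 3$ and $\sigma$ avoids $321$. Since we enumerate separately within each length $\ell \in \{0,1,2,3\}$, the quantity we want is, for each such $\ell$, the number of permutations in $S_m$ of length exactly $\ell$ that avoid $321$. The first observation is that $\ell(321) = 3$, so every permutation of length at most $2$ automatically avoids $321$ (and also $3412$, which has length $4$); hence for $\ell \in \{0,1,2\}$ the desired count is simply the total number of length-$\ell$ permutations in $S_m$, with no avoidance condition left to impose.

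First I would compute these totals from the standard generating function $\sum_{w \in S_m} q^{\ell(w)} = \prod_{i=1}^m (1 + q + \cdots + q^{i-1})$, equivalently by counting inversion tables $(b_1, \ldots, b_m)$ with $0 \le b_i \le i - 1$ and $\sum_i b_i = \ell$. Reading off the coefficient of $q^\ell$ gives $1$ for $\ell = 0$, $m-1$ for $\ell = 1$, and $\binom{m-1}{2} + (m-2) = \frac{(m+1)(m-2)}{2}$ for $\ell = 2$, establishing parts (1)--(3). The same bookkeeping, together with a short inclusion--exclusion to account for the bounds $b_2 \le 1$ and $b_3 \le 2$ that can bind when $\ell = 3$, yields the total number of length-$3$ permutations as $\binom{m+1}{3} - m = \frac{m^3 - 7m}{6}$.

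For part (4), because $\ell(321) = 3$, a length-$3$ permutation contains $321$ if and only if its three inversions are exactly those of a single decreasing subsequence of length three; so I would count the length-$3$ permutations that \emph{do} contain $321$ and subtract. The key structural step is to show that such a permutation has a very rigid form: if the decreasing triple sits at positions $p_1 < p_2 < p_3$ with values $a > b > c$, then requiring that no further inversions occur forces the positions to be consecutive and the values to be consecutive integers, arranged as $\ldots,\,c{+}2,\,c{+}1,\,c,\,\ldots$ inside an otherwise increasing word. Concretely, any non-special entry lying strictly between $p_1$ and $p_3$ would have to be simultaneously greater than $a$ and less than $c$ (impossible), so the three positions must be consecutive; a parallel argument on values forces $a,b,c$ to equal $c{+}2, c{+}1, c$ and the block to begin at position $c$. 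This leaves exactly one such permutation for each $c \in \{1, \ldots, m-2\}$, so there are $m-2$ length-$3$ permutations containing $321$. Subtracting gives $\frac{m^3 - 7m}{6} - (m-2) = \frac{(m+4)(m-1)(m-3)}{6}$, as claimed; the formula can also be read off directly from~\cite[Cor. 5.5]{Ten}.

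The main obstacle is the rigidity argument in the last paragraph: verifying that the ``no extra inversions'' requirement simultaneously pins down both the positions and the values of the decreasing triple, so that the $321$-containing length-$3$ permutations are precisely the $m-2$ windowed reversals. Everything else is routine coefficient extraction from the Mahonian generating function.
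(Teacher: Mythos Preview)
Your argument is correct. The paper itself offers no proof of this corollary beyond the citation to~\cite[Cor.~5.5]{Ten}, which (in the context of Tenner's Theorem~5.3, already invoked in the proof of Theorem~\ref{thm:planar}) gives the number of $321$- and $3412$-avoiding permutations in $S_m$ of each fixed length. Your approach is genuinely different in that it is self-contained: you read off the total length-$\ell$ counts from the Mahonian generating function and then, for $\ell=3$, you directly classify the length-$3$ permutations containing $321$ as the $m-2$ ``windowed reversals'' $1\,2\cdots(c{-}1)\,(c{+}2)\,(c{+}1)\,c\,(c{+}3)\cdots m$. The rigidity step you flagged as the main obstacle is exactly the right place to focus, and your sketch handles it: an entry strictly between two of the special positions would be forced above $a$ and below $c$ simultaneously, so the positions are consecutive; the dual argument on values forces $a=c{+}2$, $b=c{+}1$; and the absence of further inversions pins the block to start at position $c$. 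What your route buys is independence from~\cite{Ten} and an explicit picture of the exceptional length-$3$ permutations; what the citation buys is brevity and a formula valid for all $\ell$, not just $\ell\le 3$.
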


\section{Toroidal Bruhat graphs}

Now we characterize the permutations whose Bruhat graphs are toroidal.

\begin{thm}
\label{thm:toroidal}
Let $\sigma$ be a permutation.  Then the Bruhat graph $\mathcal{B}(\sigma)$ is toroidal if and only if all three of the following conditions hold:
\begin{itemize}
\item $\sigma$ avoids 3412,
\item If $\sigma$ contains 321, then $\ell(\sigma)=3$,
\item $\ell(\sigma)\leq 4$.
\end{itemize}
\end{thm}

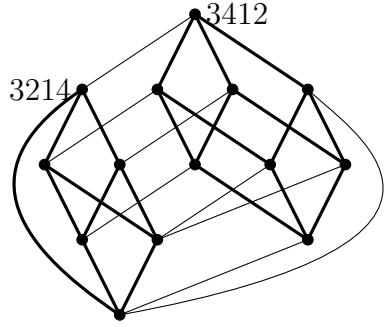
\begin{figure}
\label{figure:B3412}

\begin{center}
\begin{tikzpicture}

\coordinate (1234) at (1,0);
\coordinate (2134) at (0.5,1);
\coordinate (3124) at (0,2);
\coordinate (1324) at (1.5,1);
\coordinate (2314) at (1,2);
\coordinate [label=left:{$3214$}] (3214) at (0.5,3);

\coordinate (1243) at (3.5, 1);
\coordinate (1423) at (4, 2);
\coordinate (1342) at (3, 2);
\coordinate (1432) at (3.5, 3);

\coordinate (2143) at (2, 2);
\coordinate (3142) at (1.5, 3);
\coordinate (2413) at (2.5, 3);
\coordinate [label=right:{$3412$}] (3412) at (2, 4);

\foreach \x in {(1234), (2134), (3124), (1324), (2314), (3214), (1243), (1423), (1342), (1432), (2143), (3142), (2413), (3412)}
  {\filldraw \x circle (2pt);}

\begin{scope}[very thick]

\draw (1234) -- (2134);
\draw (1234) -- (1324);
\draw (2134) -- (3124);
\draw (2134) -- (2314);
\draw (1324) -- (3124);
\draw (1324) -- (2314);
\draw (3124) -- (3214);
\draw (2314) -- (3214);

\draw (1234) .. controls +(-1.5,1) and +(-1.5,-1) .. (3214);

\draw (1243) -- (1423);
\draw (1243) -- (1342);
\draw (1243) -- (2143);
\draw (1423) -- (1432);
\draw (1423) -- (2413);
\draw (1342) -- (1432);
\draw (1342) -- (3142);
\draw (2143) -- (3142);
\draw (2143) -- (2413);
\draw (1432) -- (3412);
\draw (3142) -- (3412);
\draw (2413) -- (3412);

\end{scope}

\draw (1234) -- (1243);
\draw (1234) .. controls +(5.5,1) and +(0.5,-0.5) .. (1432);
\draw (1324) -- (1423);
\draw (1324) -- (1342);
\draw (2134) -- (2143);
\draw (3124) -- (3142);
\draw (2314) -- (2413);
\draw (3214) -- (3412);

\end{tikzpicture}
\end{center}
\caption{The Bruhat graph $\mathcal{B}(3412)$}
\end{figure}

\begin{proof}
First we show that the graph $G=\mathcal{B}(3412)$, drawn in Figure~\ref{figure:B3412} is not toroidal.  Consider the subgraph $H=\mathcal{B}(3214)\subseteq G$ and the induced subgraph $K\subseteq G$ formed by the vertices not in $H$; these subgraphs are drawn with thicker lines in the figure, with $H$ to the left and $K$ to the right.  The subgraph $H$ is isomorphic to the complete bipartite graph $K_{3,3}$, and, in any drawing of $K_{3,3}$ on the torus, each face is homeomorphic to the plane. Also, each vertex in $H$ is connected to at least one vertex of $K$.  The subgraph $K$ is isomorphic to the edge graph of a cube, which cannot be drawn on the plane with all the vertices appearing on a single face.  (In other words, it is not outerplanar.)

Suppose for contradiction we have a drawing of $G$ on a torus with no edges crossing.  If vertices of $K$ are found on two different faces of the drawing of $H$, then there must be a crossing between an edge of $K$ and an edge of $H$.  On the other hand, if all the vertices of $K$ are found on a single face $\Phi$ of the drawing of $H$, then recall that (assuming this drawing of $H$ has no crossings), $\Phi$ is homeomorphic to the plane.  Since $K$ cannot be drawn on the plane with all the vertices appearing on a single face, there must be a vertex $v\in K$ which is not on the unbounded face of $K$ as drawn in $\Phi$.  There is an edge between $v$ and some vertex of $H$, and this edge must cross some edge of $K$.  Hence, if $\sigma$ contains $3412$, $\mathcal{B}(\sigma)$ is not toroidal.

Now we show that, if $\sigma$ contains $321$ and $\ell(\sigma)\geq 4$, then $\mathcal{B}(\sigma)$ is
not toroidal.  We first find a transposition $t$ such that $\ell(\sigma t)<\ell(\sigma)$ and $\sigma t$ also contains $321$.  Fix indices $i_1<i_2<i_3$ such
that $\sigma(i_1)>\sigma(i_2)>\sigma(i_3)$.  Since $\ell(\sigma)\geq 4$, there must be an inversion of
$\sigma$ involving some index $j\not\in\{i_1,i_2,i_3\}$.  If there is such an inversion involving two
indices $j_1,j_2\not\in\{i_1,i_2,i_3\}$, let $t$ be the transposition $t=(j_1\,\, j_2)$.  Otherwise, we divide first into three cases, where $j<i_1$, where $i_1<j<i_3$, and where $i_3<j$.  If $j<i_1$, let $i\in\{i_1,i_2,i_3\}$ be the minimum index such that $\sigma(j)>\sigma(i)$ (so that indices $i$ and $j$ form an inversion), and let $t=(j\,\,i)$.  If $j>i_3$ let $i\in\{i_1,i_2,i_3\}$ be the maximum index such that $\sigma(i)>\sigma(j)$ (so that indices $i$ and $j$ form an inversion), and let $t=(j\,\, i)$.  If $i_1<j<i_3$, we need several further subcases.  If $j$ and $i_2$ form an inversion, then let $t=(j\,\, i_2)$.  Otherwise, if $j<i_2$, then, since $j$ and $i_2$ do not form an inversion, $\sigma(j)<\sigma(i_2)<\sigma(i_1)$, so $j$ and $i_1$ form an inversion, and we let $t=(j\,\,i_1)$.  Finally, if $i_2<j<i_3$ and $\sigma(i_2)<\sigma(j)$, let $t=(j\,\,i_3)$.  Since $t$ corresponds to an inversion of $\sigma$, $\ell(\sigma t)<\ell(\sigma)$.  In the case where $i_1<j<i_3$, the permutation $\sigma t$ contains $321$ at the indices $t(i_1)$, $t(i_2)$, and $t(i_3)$; in the other cases, the permutation $\sigma t$ contains $321$ at the indices $i_1$, $i_2$, and $i_3$.

Given such a transposition $t$, we note that, by Proposition~\ref{patterntograph},
$\mathcal{B}(\sigma)$ includes two disjoint subgraphs isomorphic to $\mathcal{B}(321)$, one with the sink at vertex $\sigma$ and the other with the sink at vertex $\sigma t$.  (In the case where $\sigma$ and $\sigma t$ contain $321$ at the same indices $i_1<i_2<i_3$, the two subgraphs are disjoint since the sets $\{\sigma(i_1), \sigma(i_2),\sigma(i_3)\}$ and $\{\sigma t(i_1), \sigma t(i_2), \sigma t(i_3)\}$ are not equal.  In the last case where $\sigma$ and $\sigma t$ contain $321$ at different indices, the two subgraphs are disjoint since $\{\sigma(i_1),\sigma(i_2),\sigma(i_3)\}$ and $\{\sigma t(t(i_1)),\sigma t(t(i_2)),\sigma t(t(i_3))\}$ are the same.)  Hence $\mathcal{B}(\sigma)$ contains two disjoint subgraphs neither of which are planar.  Since $\mathcal{B}(\sigma)$ is connected, a theorem of Battle, Harary, Kodama, and Youngs~\cite{BHKY} (see also~\cite[Thm. 3.5.3]{GrossTucker}) now implies that $\mathcal{B}(\sigma)$ cannot be toroidal.

If $\sigma$ contains $321$ and $\ell(\sigma)=3$, then $\mathcal{B}(\sigma)$ is isomorphic to $\mathcal{B}(321)$, which is $K_{3,3}$.  This graph is well known to be toroidal.

Finally, due to a theorem of Tenner~\cite{Ten} previously mentioned in the proof of Theorem~\ref{thm:planar}, if $\sigma$ avoids both $321$ and $3412$, then $\mathcal{B}(\sigma)$ is the edge graph of a cube of dimension $\ell(\sigma)$.  The edge graph of the $4$-dimensional cube is toroidal, but the edge graph of the $5$-dimensional cube is not.

\end{proof}

As before, we can give this characterization in terms of the Bruhat graph.

\begin{cor}
The graph $\mathcal{B}(\sigma)$ is toroidal if and only if it is a point, a single edge, the edge graph of a square, a cube, or a $4$-cube, or the complete bipartite graph $K_{3,3}$.
\end{cor}

Using Theorem~\ref{multiplefinite}, we see the following.

\begin{cor}
The graph $\mathcal{B}(\sigma)$ is toroidal if and only if $\sigma$ avoids all permutations $\pi\in S_m$ where $m\leq 10$ and $\mathcal{B}(\pi)$ is not toroidal.
\end{cor}

A computer calculation produces an explicit minimal list of 92 permutations that must be avoided.

The number of permutations with toroidal Bruhat graphs, enumerated by length, is as follows.

\begin{cor}
For an $m\geq1$, there are
\begin{enumerate}
\item $1$ permutation of length 0,
\item $(m-1)$ permutations of length 1,
\item $\frac{(m+1)(m-2)}{2}$ permutations of length 2,
\item $\frac{(m+4)(m-1)(m-3)}{6}+(m-2)$ permutations of length 3, and
\item $\frac{(m+1)(m-4)(m^2+5m-18)}{24}$ permutations of length 4
\end{enumerate}
which have toroidal Bruhat graphs
\end{cor}

\begin{proof}
A permutation $\sigma\in S_m$ which contains $321$ and has length $3$ must be a transposition of the form $(i\,\,i+2)$ for some $i$ with $1\leq i\leq m-2$.  The rest of the enumeration follows from~\cite[Cor 5.5]{Ten}, as in the case of planar Bruhat graphs.
\end{proof}

\end{document}